\title{Granular media equation with double-well external landscape: limiting steady state}
\author[*]{Julian Tugaut}
\affil[*]{Univ Lyon, Universit\'e Jean Monnet, CNRS UMR 5208, Institut Camille Jordan, Maison de l'Universit\'e, 10 rue Tr\'efilerie, CS 82301, 42023 Saint-\'Etienne Cedex 2, France}
\begin{document}

\newcommand{\bRb}{\mathbb{R}}
\newcommand{\bCb}{\mathbb{C}}
\newcommand{\bEb}{\mathbb{E}}
\newcommand{\bKb}{\mathbb{K}}
\newcommand{\bQb}{\mathbb{Q}}
\newcommand{\bFb}{\mathbb{F}}
\newcommand{\bGb}{\mathbb{G}}
\newcommand{\bNb}{\mathbb{N}}
\newcommand{\bZb}{\mathbb{Z}}

\newcommand{\deriv}{\stackrel{\mbox{\bf\Large{}$\cdot$\normalsize{}}}}
\newcommand{\dederiv}{\stackrel{\mbox{\bf\Large{}$\cdot\cdot$\normalsize{}}}}

\theoremstyle{plain} \newtheorem{thm}{Theorem}[section]
\theoremstyle{plain} \newtheorem{prop}[thm]{Proposition}
\theoremstyle{plain} \newtheorem{props}[thm]{Properties}
\theoremstyle{plain} \newtheorem{ex}[thm]{Example}
\theoremstyle{plain} \newtheorem{contrex}[thm]{Coounterexample}
\theoremstyle{plain} \newtheorem{cor}[thm]{Corollary}
\theoremstyle{plain} \newtheorem{hyp}[thm]{Hypothesis}
\theoremstyle{plain} \newtheorem{assu}[thm]{Assumption}
\theoremstyle{plain} \newtheorem{hyps}[thm]{Hypotheses}
\theoremstyle{plain} \newtheorem{lem}[thm]{Lemma}
\theoremstyle{plain} \newtheorem{rem}[thm]{Remark}
\theoremstyle{plain} \newtheorem{nota}[thm]{Notation}
\theoremstyle{plain} \newtheorem{defn}[thm]{Definition}

\newcommand{\cRc}{\mathcal{R}}
\newcommand{\cCc}{\mathcal{C}}
\newcommand{\cEc}{\mathcal{E}}
\newcommand{\cKc}{\mathcal{K}}
\newcommand{\cQc}{\mathcal{Q}}
\newcommand{\cFc}{\mathcal{F}}
\newcommand{\cGc}{\mathcal{G}}
\newcommand{\cNc}{\mathcal{N}}
\newcommand{\cZc}{\mathcal{Z}}
\newcommand{\cOc}{\mathcal{O}}
\newcommand{\cSc}{\mathcal{S}}
\newcommand{\cAc}{\mathcal{A}}
\newcommand{\cBc}{\mathcal{B}}
\newcommand{\cIc}{\mathcal{I}}
\newcommand{\cDc}{\mathcal{D}}
\newcommand{\cLc}{\mathcal{L}}
\newcommand{\cHc}{\mathcal{H}}

\newcommand{\Ima}{{\rm Im}}
\newcommand{\Rea}{{\rm Re}}
\newcommand{\gaga}{\left|\left|}
\newcommand{\drdr}{\right|\right|}
\newcommand{\lra}{\left\langle}
\newcommand{\rra}{\right\rangle}

\newcommand{\EE}{\mathbb{E}}
\newcommand{\PP}{\mathbb{P}}

\newcommand{\sepa}{\left|\right.}

\newcommand{\crg}{[\![}
\newcommand{\crd}{]\!]}

\newcommand{\sgn}{{\rm Sign}}
\newcommand{\vari}{{\rm Var}}
\newcommand{\cov}{{\rm Cov}}
\newcommand{\poin}[1]{\dot{#1}}
\newcommand{\norm}[1]{\Vert #1\Vert}

\maketitle

\begin{abstract}
In this paper, we give a simple condition on the initial state of the granular media equation which ensures that the limit as the time goes to infinity is the unique steady state with positive center of mass. To do so, we use functional inequalities, Laplace method and McKean-Vlasov diffusion (which corresponds to the probabilistic interpretation of the granular media equation).
\end{abstract}
\medskip

{\bf Key words and phrases:} Granular media equation ; Self-stabilizing diffusion ; Laplace method ; Long-time behavior \par\medskip

{\bf 2020 AMS subject classifications:} Primary:  35K55; Secondary: 60J60, 60G10, 39B72\par\medskip

\section{Introduction}

In this work, we are interested in the long-time behavior of the following granular media equation in the one-dimensional setting:

\begin{equation}
\label{granular}
\frac{\partial}{\partial t}\mu_t^\sigma(x)=\frac{\sigma^2}{2}\frac{\partial^2}{\partial x^2}\mu_t^\sigma(x)+\frac{\partial}{\partial x}\left\{\mu_t^\sigma(x)\left(\nabla V(x)+\alpha(x-m_1^\sigma(t))\right)\right\}
\end{equation}

with $\alpha>0$ and $m_1^\sigma(t):=M\left(\mu_t^\sigma\right)$ where for any measure $\mu$, we put:

\begin{equation}
\label{Pibis2}
M(\mu):=\int_\bRb x\mu(dx)\,.
\end{equation}

Since $m_1^\sigma(t)$ depends on $\mu_t^\sigma$, Equation~\eqref{granular} is nonlinear. Let us point out that it may be rewritten like so:

\begin{equation}
\label{granular2}
\frac{\partial}{\partial t}\mu_t^\sigma(x)=\frac{\sigma^2}{2}\frac{\partial^2}{\partial x^2}\mu_t^\sigma(x)+\frac{\partial}{\partial x}\left\{\mu_t^\sigma(x)\left(\nabla V(x)+(\nabla F\ast\mu_t^\sigma)(x)\right)\right\}\,,
\end{equation}

with $F(x):=\frac{\alpha}{2}x^2$ and with $\ast$ standing for the convolution.

The precise assumptions on $V$ will be given subsequently.

This work deals with the fully nonconvex case: $V$ has two wells located in $a>0$ and in $-a$. In this setting, it is also well-known that if $\sigma$ is small enough, then there are several steady states, see \cite{HT1,PT}. Despite this non-uniqueness, the convergence towards one of the invariant probability measure has been proven in \cite{AOP}.

Nevertheless, very few is known about the basins of attraction. In \cite{Tamura1984}, it has been proven that if the initial measure $\mu_0^\sigma$ is sufficiently close to a steady state $\mu^\sigma$, then $\mu_t^\sigma$ converges towards $\mu^\sigma$ albeit there is no quantification at all of the expression ``sufficiently close''. Moreover, the temperature $\frac{\sigma^2}{2}$ is equal to one in the latter result. As a consequence, there is no uniformity in the previous result with respect to $\sigma$. In \cite{AOP}, some conditions have been established ensuring that the limiting probability measure has a positive expectation but there are difficult to check. Indeed, there are related to the initial free-energy and the infimum of this free-energy on an hyperplane of measures. Finally, in \cite{KRM}, some condition has been found for small temperature albeit there is no quantification at all of the expression ``small temperature''.

In this work, we assume that there are exactly three steady states with total mass equal to $1$: $\nu_-^\sigma$, $\nu_+^\sigma$ and $\nu_0^\sigma$ where $\int_\bRb x\nu_0^\sigma(dx)=0$ and $\pm\int_\bRb x\nu_\pm^\sigma(dx)>0$. We then exhibit a simple condition ensuring that $\mu_t^\sigma$ converges towards $\nu_+^\sigma$.

The main ingredient to establish the result is the self-stabilizing diffusion, that is a stochastic process which probability law is satisfying Equation~\eqref{granular}:

\begin{equation}
\label{MKV}
X_t^\sigma=X_0+\sigma B_t-\int_0^t\nabla V(X_s^\sigma)ds-\alpha\int_0^t\left(X_s^\sigma-\mathbb{E}\left[X_s^\sigma\right]\right)ds\,.
\end{equation}

Here, $X_0$ is a random variable, independent from the Brownian motion $B$ and we assume that $X_0$ follows the law $\mu_0^\sigma$.

This kind of processes have been introduced in the seminal works \cite{McKean1966,McKean}. Let us point out that $\mathcal{L}(X_t^\sigma)=\mu_t^\sigma$ for any $t\geq0$.

We will also write the equation satisfied by $X^\sigma$ in this way:

\begin{equation}
\label{MKV2}
X_t^\sigma=X_0+\sigma B_t-\int_0^t\nabla V(X_s^\sigma)ds-\alpha\int_0^t\left(X_s^\sigma-m_1^\sigma(s)\right)ds\,.
\end{equation}

We will not discuss the well-posedness of Equation~\eqref{MKV}. About this, we refer to \cite{HIP}. 

In \cite{PT}, we have proven the existence of $\sigma_c(\alpha)>0$ such that if $\sigma\geq\sigma_c(\alpha)$, there is a unique invariant probability measure and if $\sigma<\sigma_c(\alpha)$, there are exactly three. In this work, we assume that $\sigma<\sigma_c(\alpha)$ to avoid the obvious case.

After giving the precise assumptions on the potential~$V$ and on the initial measure $\mu_0^\sigma$, we provide a preliminary subsection. After, we state the main results that is Theorem~\ref{thm:A} and Corollary~\ref{cor:mum} (with its proof). In a last section, we give the proof of the theorem.

\subsection*{Assumptions}

In the whole paper, we assume that $V$ satisfies the following set of hypotheses, similar to the ones in \cite{PT}:

\noindent{}{\bf (V-1)} \emph{Polynomial function:} $V$ is a polynomial function with $\deg(V)\geq4$.\\
{\bf (V-2)} \emph{Symmetry:} $V$ is an even function.\\
{\bf (V-3)} \emph{Double-well potential:} The equation $V'(x)=0$ admits exactly three solutions: $a$, $-a$ and $0$ with $a>0$;  $V''(a)>0$ and $V''(0)<0$. The bottoms of the wells are reached for $x=a$ and $x=-a$. Moreover, $V^{(2k)}(0)\geq0$, for all $k\geq2$.\\
{\bf (V-4)} $\displaystyle\lim_{x\to\pm\infty}V''(x)=+\infty$ and for any $x\geq a$, $V''(x)>0$.\\
{\bf (V-5)} \emph{Initialization:} $V(0)=0$.

In the following, the nonconvexity of $V$ will be measured by the following constant: $\theta:=\sup_\bRb-V''$. We also assume, eventually, that synchronization occurs that means: $\alpha>\theta$.

\medskip

\noindent{}The initial measure $\mu_0^\sigma$ is assumed to be absolutely continuous with respect to the Lebesgue measure with a density that we denote for simplicity by $\mu_0^\sigma$. Moreover, $\int_\bRb x^{2k}\mu_0^\sigma(x)dx<\infty$ for any $k\in\mathbb{N}$ and $\int_\bRb\mu_0^\sigma(x)\log(\mu_0^\sigma(x))dx<+\infty$ that is the initial entropy is finite and so the same is true for the initial free-energy.

\subsection*{Preliminary}

In \cite{HT1,PT}, we have proven that if $\mu^\sigma$ is a steady state with total mass equal to $1$ of the granular media equation \eqref{granular} then there exists $m\in\bRb$ such that $\mu^\sigma=\mu^{m,\sigma}$ where the measure $\mu^{m,\sigma}$ is defined as

\begin{equation}
\label{Mum}
\mu^{m,\sigma}(dx):=\frac{\exp\left[-\frac{2}{\sigma^2}\left(V(x)+\frac{\alpha}{2}x^2-\alpha mx\right)\right]}{\int_\bRb\exp\left[-\frac{2}{\sigma^2}\left(V(y)+\frac{\alpha}{2}y^2-\alpha my\right)\right]dy}\,dx\,,
\end{equation}

Moreover, $m$ is a zero of the following function:

\begin{equation}
\label{Chisigma}
\chi_\sigma(m):=M\left(\mu^{m,\sigma}\right)-m=\frac{\int_\bRb x\exp\left[-\frac{2}{\sigma^2}\left(V(x)+\frac{\alpha}{2}x^2-\alpha mx\right)\right]dx}{\int_\bRb\exp\left[-\frac{2}{\sigma^2}\left(V(x)+\frac{\alpha}{2}x^2-\alpha mx\right)\right]dx}-m\,.
\end{equation}

In \cite{PT}, we have shown that if $\sigma<\sigma_c(\alpha)$, then the function $\chi_\sigma$ is increasing on an interval $[0;t_\sigma]$ then decreasing on $[t_\sigma;+\infty[$ where $t_\sigma>0$. Moreover, $\chi_\sigma(+\infty)=-\infty$ and $\chi_\sigma(0)=0$. Since the function $\chi_\sigma$ is odd, it admits exactly three zero: $0$, $m(\sigma)>0$ and $-m(\sigma)$.

As a consequence, there are three steady states with total mass equal to $1$: $\nu_0^\sigma:=\mu^{0,\sigma}$, $\nu_+^\sigma:=\mu^{m(\sigma),\sigma}$ and $\nu_-^\sigma:=\mu^{-m(\sigma),\sigma}$.

\subsection*{Main results}

We now give the main results about the choice of the limiting probability in the long-time behavior.

\begin{thm}
\label{thm:A}
We assume that there exists $\delta\in]0;m(\sigma)[$ such that 

\begin{equation}
\label{Condition1}
\int_\bRb x\mu_0^{\sigma}(dx)>m(\sigma)-\delta\,,
\end{equation}

and

\begin{equation}
\label{Condition2}
\chi_\sigma(m(\sigma)-\delta)>
\left\{
\begin{array}{ll}
	\mathbb{W}_2\left(\mu_0^{\sigma};\mu^{m(\sigma)-\delta,\sigma}\right)&\quad if\quad\alpha>\theta\\
	{\rm L}_2\left(\mu_0^{\sigma};\mu^{m(\sigma)-\delta,\sigma}\right)&\quad if\quad\alpha\leq\theta
\end{array}\right.\,,
\end{equation}

where $\mathbb{W}_2$ stands for the quadratic Wasserstein distance. Then, $\mu_t^\sigma$ converges weakly towards $\nu_+^\sigma$ as $t$ goes to infinity.
\end{thm}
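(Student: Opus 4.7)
The plan is to prove the stronger pointwise statement that $m_1^\sigma(t) > m_0$ for every $t \geq 0$, where I set $m_0 := m(\sigma)-\delta$ and $c := \chi_\sigma(m_0) > 0$ (positivity of $c$ comes from the fact that $m_0 \in (0,m(\sigma))$ and $\chi_\sigma$ is strictly positive on this interval, as recalled in the preliminary subsection). Once this is in hand, the convergence result of~\cite{AOP} forces $\mu_t^\sigma$ to converge weakly to one of $\{\nu_-^\sigma,\nu_0^\sigma,\nu_+^\sigma\}$; combined with uniform-in-time moment bounds on $\mu_t^\sigma$ (obtained from the confining assumption on $V$ via a Lyapunov argument on~\eqref{MKV}), the first moment converges, so the limiting centre of mass is $\geq m_0 > 0$, and the limit must therefore be the unique steady state with positive mean, $\nu_+^\sigma$.

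\textbf{Coupling and key differential inequality.} To control $m_1^\sigma(t)$, I would couple $X^\sigma$ with the linear stationary diffusion
\[ dY_t = \sigma\,dB_t - V'(Y_t)\,dt - \alpha(Y_t-m_0)\,dt, \qquad Y_0 \sim \mu^{m_0,\sigma}, \]
driven by the same Brownian motion as in~\eqref{MKV2} and with $(X_0, Y_0)$ realising the optimal $\mathbb{W}_2$-coupling between $\mu_0^\sigma$ and $\mu^{m_0,\sigma}$. Since $\mu^{m_0,\sigma}$ is invariant for $Y$, one has $Y_t \sim \mu^{m_0,\sigma}$ for every $t \geq 0$, and in particular $\mathbb{E}[Y_t] = M(\mu^{m_0,\sigma}) = m_0+c$. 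In the synchronisation regime $\alpha > \theta$, the semiconvexity $V'' \geq -\theta$ (the very definition of $\theta$) and It\^o's formula applied to $Z_t := X_t^\sigma - Y_t$ yield
\[ \frac{d}{dt}\mathbb{E}[Z_t^2] \leq -2(\alpha-\theta)\,\mathbb{E}[Z_t^2] + 2\alpha\bigl(m_1^\sigma(t)-m_0\bigr)\bigl(m_1^\sigma(t)-m_0-c\bigr), \]
where I use $\mathbb{E}[Z_t] = m_1^\sigma(t)-(m_0+c)$. Since $\mathbb{W}_2^2(\mu_t^\sigma,\mu^{m_0,\sigma}) \leq \mathbb{E}[Z_t^2]$ and $\bigl(\mathbb{E}[Z_t]\bigr)^2 \leq \mathbb{E}[Z_t^2]$, Condition~\eqref{Condition2} reads $\mathbb{E}[Z_0^2] < c^2$.

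\textbf{Bootstrap, $L^2$ regime and main obstacle.} The theorem then follows by contradiction: assuming $\tau := \inf\{t > 0 : m_1^\sigma(t) = m_0\}$ is finite, the identity $\mathbb{E}[Z_\tau] = -c$ forces $\mathbb{E}[Z_\tau^2] \geq c^2$; conversely, on any subinterval of $[0,\tau)$ on which $m_1^\sigma(t) \in [m_0, m_0+c]$ the quadratic factor on the right is non-positive, so $\mathbb{E}[Z_t^2]$ is non-increasing there, and a careful analysis of the transient excursions during which $m_1^\sigma$ exceeds $m_0+c$ is then needed to bootstrap $\mathbb{E}[Z_t^2] < c^2$ up to $\tau$, contradicting the lower bound just obtained. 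For the regime $\alpha \leq \theta$ the Wasserstein coupling no longer contracts and one replaces it by an $L^2$-estimate on the density of $\mu_t^\sigma$, based on the linear Fokker--Planck equation for $\mu^{m_0,\sigma}$, a functional inequality (Poincar\'e or log-Sobolev) for $\mu^{m_0,\sigma}$, and a moment bound yielding $|m_1^\sigma(t)-(m_0+c)| \lesssim L_2(\mu_t^\sigma,\mu^{m_0,\sigma})$, which lets~\eqref{Condition2} play the same role. The main technical difficulty is precisely the bootstrap: the right-hand side of the differential inequality has indefinite sign because of the factor $(m_1^\sigma-m_0)(m_1^\sigma-m_0-c)$, and one must rule out the scenario in which $m_1^\sigma$ briefly overshoots $m_0+c$, pumps energy back into $\mathbb{E}[Z_t^2]$, and then plunges back down to $m_0$.
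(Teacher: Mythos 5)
Your coupling inequality for $\EE[Z_t^2]$ is correct, but the proposal is not a proof: the ``bootstrap'' you yourself flag as the main technical difficulty is exactly the missing step, and your framework does not obviously close it. When $m_1^\sigma(t)>m_0+c$ the right-hand side of your differential inequality is positive, and Cauchy--Schwarz does not rescue it: with $u:=m_1^\sigma(t)-m_0-c>0$, inserting $\EE[Z_t^2]\geq u^2$ only bounds the right-hand side by $2u(\theta u+\alpha c)>0$, so $\EE[Z_t^2]$ may rise above $c^2$ during such an excursion and nothing you establish prevents it from still exceeding $c^2$ when $m_1^\sigma$ later reaches $m_0$; the contradiction at $\tau$ is therefore not obtained. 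The $\alpha\leq\theta$ regime is likewise only gestured at, and as written it conflates the linear and nonlinear dynamics: $\mu_t^\sigma$ does not solve the linear Fokker--Planck equation whose invariant measure is $\mu^{m_0,\sigma}$, so a Poincar\'e or log-Sobolev inequality for $\mu^{m_0,\sigma}$ cannot be applied directly to the nonlinear flow; it must be applied to an auxiliary linear diffusion, which your $L^2$ paragraph never sets up.

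The paper sidesteps the overshoot problem entirely by replacing the symmetric $L^2$-coupling with a one-sided comparison. It runs the frozen diffusion $Y^\sigma$ (constant centre $m_0=m(\sigma)-\delta$) from the \emph{same} initial condition $X_0$ and the same Brownian motion, and sets $T_0^\sigma:=\inf\{t\geq0:\EE(X_t^\sigma)\leq m_0\}$. On $[0;T_0^\sigma]$ the drift of $X^\sigma$ dominates that of $Y^\sigma$ because $m_1^\sigma(s)\geq m_0$, so $X_t^\sigma\geq Y_t^\sigma$ pathwise and $\EE(X_t^\sigma)\geq\EE(Y_t^\sigma)$; an excursion of $m_1^\sigma$ above $m_0+c$ only reinforces this inequality, so no bootstrap is needed. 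The contraction of the \emph{linear} semigroup toward $\mu^{m_0,\sigma}$ (Bolley--Gentil--Guillin in $\mathbb{W}_2$ when $\alpha>\theta$, the Poincar\'e inequality of Bakry--Barthe--Cattiaux--Guillin in $L^2$ when $\alpha\leq\theta$) then yields $\EE(Y_t^\sigma)\geq m_0+\chi_\sigma(m_0)-\mathbb{W}_2\left(\mu_0^\sigma;\mu^{m_0,\sigma}\right)>m_0$ (respectively with ${\rm L}_2$), hence $T_0^\sigma=+\infty$, and the conclusion follows from the convergence theorem of \cite{AOP} exactly as you indicate (your remark that uniform moment bounds are needed to pass to the limit in the mean is a legitimate point of care, which the paper leaves implicit). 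To salvage your route you would either have to import this monotonicity argument, or find a genuinely new mechanism controlling the excursions of $m_1^\sigma$ above $m_0+c$.
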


As mentioned above, $\chi_\sigma(m)\leq0$ for any $m\geq m(\sigma)$ so we immediately deduce that the theorem can not be applied if $\int_\bRb x\mu_0^{\sigma}(dx)\geq m(\sigma)$.

We now give an immediate corollary:

\begin{cor}
\label{cor:mum}
We assume that there exists $m\in]0;m(\sigma)[$ such that $\mu_0^\sigma=\mu^{m,\sigma}$. Then, $\mu_t^\sigma$ converges weakly towards $\nu_+^\sigma$ as $t$ goes to infinity.
\end{cor}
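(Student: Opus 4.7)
The plan is to apply Theorem~\ref{thm:A} with the specific choice $\delta := m(\sigma) - m$. Since $m \in (0, m(\sigma))$, this $\delta$ lies in $(0, m(\sigma))$ as required. The whole point of this choice is that it makes $\mu^{m(\sigma)-\delta,\sigma} = \mu^{m,\sigma} = \mu_0^\sigma$, so both $\mathbb{W}_2\left(\mu_0^\sigma, \mu^{m(\sigma)-\delta,\sigma}\right)$ and ${\rm L}_2\left(\mu_0^\sigma, \mu^{m(\sigma)-\delta,\sigma}\right)$ vanish. The right-hand side of~\eqref{Condition2} is then $0$, while its left-hand side equals $\chi_\sigma(m)$. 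Moreover, $\int_\bRb x\, \mu_0^\sigma(dx) = M(\mu^{m,\sigma}) = m + \chi_\sigma(m)$, so Condition~\eqref{Condition1} rewrites as $\chi_\sigma(m) > 0$. Consequently, verifying the hypotheses of Theorem~\ref{thm:A} reduces to the single inequality $\chi_\sigma(m) > 0$.

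To obtain this, I would invoke the structural properties of $\chi_\sigma$ recalled in the preliminary subsection (from \cite{PT}): $\chi_\sigma$ is odd with $\chi_\sigma(0) = 0 = \chi_\sigma(m(\sigma))$, increasing on $[0, t_\sigma]$ and decreasing on $[t_\sigma, +\infty)$, with $\chi_\sigma(+\infty) = -\infty$. Since $\chi_\sigma$ must cross zero a second time at $m(\sigma)$ while decreasing to $-\infty$, we necessarily have $m(\sigma) > t_\sigma$; combined with $\chi_\sigma(0) = 0$ and strict monotonicity on $[0, t_\sigma]$, this forces $\chi_\sigma > 0$ on the whole interval $(0, m(\sigma))$. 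In particular $\chi_\sigma(m) > 0$.

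With both~\eqref{Condition1} and~\eqref{Condition2} established, Theorem~\ref{thm:A} applies and yields the weak convergence of $\mu_t^\sigma$ towards $\nu_+^\sigma$. There is no real obstacle in this argument: the essence is simply recognizing that the particular form $\mu_0^\sigma = \mu^{m,\sigma}$ allows us to cancel the right-hand side of~\eqref{Condition2} by matching the reference measure to the initial datum; every remaining step is a direct read-off from the sign analysis of $\chi_\sigma$ already carried out in \cite{PT}.
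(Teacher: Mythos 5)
Your proposal is correct and follows essentially the same route as the paper: take $\delta = m(\sigma)-m$ so that the reference measure coincides with $\mu_0^\sigma$, making the right-hand side of \eqref{Condition2} vanish, and reduce both hypotheses to $\chi_\sigma(m)>0$. The only difference is that you spell out the sign analysis $\chi_\sigma>0$ on $]0;m(\sigma)[$ (via the monotonicity structure and the three zeros of $\chi_\sigma$), which the paper simply asserts; this is a harmless and correct addition.
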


\begin{proof}
Hypothesis \eqref{Condition2} is satisfied since for any $m\in]0;m(\sigma)[$, there exists $\delta\in]0;m(\sigma)[$ such that $m=m(\sigma)-\delta$. Then, we just remark: $\chi_\sigma(m(\sigma)-\delta)>0=\mathbb{W}_2\left(\mu^{m(\sigma)-\delta,\sigma};\mu^{m(\sigma)-\delta,\sigma}\right)={\rm L}_2\left(\mu^{m(\sigma)-\delta,\sigma};\mu^{m(\sigma)-\delta,\sigma}\right)$. Hypothesis \eqref{Condition1} is also satisfied due to $\int_\bRb x\mu^{m(\sigma)-\delta,\sigma}(dx)=M\left(\mu^{m(\sigma)-\delta,\sigma}\right)=m(\sigma)-\delta+\chi_\sigma(m(\sigma)-\delta)>m(\sigma)-\delta$. Thus, we may apply Theorem~\ref{thm:A}.
\end{proof}

We point out that we have a similar result with the other half-line: if $\mu_0^\sigma=\mu^{-m(\sigma)+\delta,\sigma}$. Then, $\mu_t^\sigma$ converges weakly towards $\nu_-^\sigma$ as $t$ goes to infinity.

\bigskip

{\bf Acknowledgements: }This work is supported by the  French ANR grant METANOLIN (ANR-19-CE40-0009).

\section{Proof of Theorem~\ref{thm:A}}

We introduce the following diffusion

\begin{equation}
\label{EDS:frozen2}
Y_t^\sigma=X_0+\sigma B_t-\int_0^t\nabla V(Y_s^\sigma)ds-\alpha\int_0^t\left(Y_s^\sigma-\left(m(\sigma)-\delta\right)\right)ds\,.
\end{equation}

We consider the \emph{deterministic} time $T_0^\sigma:=\inf\left\{t\geq0\,\,:\,\,\mathbb{E}(X_t^\sigma)\leq m(\sigma)-\delta\right\}$ where $X^\sigma$ is defined in Equation~\eqref{MKV}. Then, for any $t\in[0;T_0^\sigma]$, we have $X_t^\sigma\geq Y_t^\sigma$ so $\EE\left(X_t^\sigma\right)\geq\EE\left(Y_t^\sigma\right)$. However, $\EE\left(Y_t^\sigma\right)=\EE\left(Y_\infty^\sigma\right)+\EE\left(Y_t^\sigma-Y_\infty^\sigma\right)$ where $Y_\infty^\sigma$ follows the law $\mu^{m(\sigma)-\delta,\sigma}$, which is the unique invariant probability of Diffusion~\eqref{EDS:frozen2}.

In the one hand, we assume that $\alpha>\theta$. Then, the potential $x\mapsto V(x)+\frac{\alpha}{2}\left(x-(m(\sigma)-\delta)\right)^2$ is uniformly strictly convex. Also, the unique invariant probability measure of Equation~\eqref{EDS:frozen2} is the measure $\mu^{m(\sigma)-\delta,\sigma}$. Hence, by applying \cite[Theorem 2.1.]{BGG1}, we deduce:

\begin{equation*}
\mathbb{W}_2\left(\mathcal{L}\left(Y_t^\sigma\right);\mu^{m(\sigma)-\delta,\sigma}\right)\leq e^{-(\alpha-\theta)t}\mathbb{W}_2\left(\mathcal{L}\left(X_0\right);\mu^{m(\sigma)-\delta,\sigma}\right)\,.
\end{equation*}

As a consequence: 

\begin{align*}
\EE\left(Y_t^\sigma-Y_\infty^\sigma\right)&\geq-\EE\left(\left|Y_t^\sigma-Y_\infty^\sigma\right|\right)\\
&\geq-\mathbb{W}_2\left(\mathcal{L}\left(Y_t^\sigma\right);\mu^{m(\sigma)-\delta,\sigma}\right)\\
&\geq-e^{-(\alpha-\theta)t}\mathbb{W}_2\left(\mathcal{L}\left(X_0\right);\mu^{m(\sigma)-\delta,\sigma}\right)\\
&\geq-\mathbb{W}_2\left(\mathcal{L}\left(X_0\right);\mu^{m(\sigma)-\delta,\sigma}\right)\,.
\end{align*}

Hence, for any $t\leq T_0^\sigma$, we have:
\begin{equation*}
\EE(Y_t^\sigma)\geq M\left(\mu^{m(\sigma)-\delta,\sigma}\right)-\mathbb{W}_2\left(\mathcal{L}\left(X_0\right);\mu^{m(\sigma)-\delta,\sigma}\right)\,.
\end{equation*}
We deduce:

\begin{equation*}
\EE(Y_t^\sigma)\geq\left(m(\sigma)-\delta\right)+\chi_\sigma\left(m(\sigma)-\delta\right)-\mathbb{W}_2\left(\mathcal{L}\left(X_0\right);\mu^{m(\sigma)-\delta,\sigma}\right)\,.
\end{equation*}

Consequently, for any $t\leq T_0^\sigma$, $\EE(X_t^\sigma)\geq \left(m(\sigma)-\delta\right)+\chi_\sigma\left(m(\sigma)-\delta\right)-\mathbb{W}_2\left(\mathcal{L}\left(X_0\right);\mu^{m(\sigma)-\delta,\sigma}\right)>m(\sigma)-\delta$. Hence, $T_0^\sigma=+\infty$ so that for any $t\geq0$: $\EE(X_t^\sigma)\geq m(\sigma)-\delta$.

By applying main theorem in \cite{AOP}, we deduce that $\mathcal{L}\left(X_t^\sigma\right)$ weakly converges towards one of the steady state: $\nu_-^\sigma$, $\nu_0^\sigma$ or $\nu_+^\sigma$.

Since $M\left(\nu_-^\sigma\right)<0=M\left(\nu_0^\sigma\right)<M\left(\mathcal{L}\left(X_t^\sigma\right)\right)$ for any $t\geq0$, we deduce that $\mathcal{L}\left(X_t^\sigma\right)$ does not converge towards either $\nu_-^\sigma$ nor $\nu_0^\sigma$. Consequently, $\mathcal{L}\left(X_t^\sigma\right)$ weakly converges towards $\nu_+^\sigma$ as $t$ goes to infinity.

In the other hand, if $\alpha\leq\theta$, we can use Poincaré inequality from \cite[Corollary 1.6.]{BBCG} since $V''(x)+\alpha>0$ for sufficiently large $x$. We deduce the existence of $C(\sigma)>0$ such that

\begin{equation*}
{\rm L}_2\left(\mathcal{L}\left(Y_t^\sigma\right);\mu^{m(\sigma)-\delta,\sigma}\right)\leq e^{-C(\sigma)t}{\rm L}_2\left(\mathcal{L}\left(X_0\right);\mu^{m(\sigma)-\delta,\sigma}\right)\,,
\end{equation*}

so that

\begin{equation*}
\mathbb{W}_2\left(\mathcal{L}\left(Y_t^\sigma\right);\mu^{m(\sigma)-\delta,\sigma}\right)\leq e^{-C(\sigma)t}{\rm L}_2\left(\mathcal{L}\left(X_0\right);\mu^{m(\sigma)-\delta,\sigma}\right)\,.
\end{equation*}

We then complete the proof as previously with ${\rm L}_2\left(\mathcal{L}\left(X_0\right);\mu^{m(\sigma)-\delta,\sigma}\right)$ instead of $\mathbb{W}_2\left(\mathcal{L}\left(X_0\right);\mu^{m(\sigma)-\delta,\sigma}\right)$.

\begin{small}
\def\cprime{$'$}

\end{small}

\end{document}